\theoremstyle{plain}
\newtheorem{thm}{Theorem}[section]
\newtheorem{theorem}[thm]{Theorem}
\newtheorem{proposition}[thm]{Proposition}
\theoremstyle{definition}
\newtheorem{definition}[thm]{Definition}
\newtheorem{remark}[thm]{Remark}
\newtheorem{problem}[thm]{Problem}
\newtheorem{thevarthm}[thm]{\varthmname}
\newenvironment{varthm*}[1]{\trivlist\item[]{\bf #1.}\it}{\endtrivlist}
\renewcommand\geq{\geqslant}
\renewcommand\leq{\leqslant}
\newcommand\be{\begin{eqnarray*}}
\newcommand\ee{\end{eqnarray*}}
\newcommand\newop[2]{\def#1{\mathop{\rm #2}\nolimits}}
\newop\log{log}
\newop\ord{ord}
\newop\Gal{Gal}
\newop\SL{SL}
\newop\Bl{Bl}
\newop\mult{mult}
\newop\mass{mass}
\newop\div{div}
\newop\codim{codim}
\newop\sing{sing}
\newop\vdim{vdim}
\newop\edim{edim}
\newop\Ass{Ass}
\newop\size{size}
\newop\reg{reg}
\newop\satdeg{satdeg}
\newop\supp{supp}
\newop\Neg{Neg}
\newop\Nef{Nef}
\newop\Nefh{Nef_H}
\newop\Eff{Eff}
\newop\Zar{Zar}
\newop\MB{MB}
\newop\MBxC{MB\mathit{(x,C)}}
\newop\NnB{NnB}
\newop\Bigg{Big}
\newop\Effbar{\overline{\Eff}}
\def\keywordname{{\bfseries Keywords}}%
\def\keywords#1{\par\addvspace\medskipamount{\rightskip=0pt plus1cm
\def\and{\ifhmode\unskip\nobreak\fi\ $\cdot$
}\noindent\keywordname\enspace\ignorespaces#1\par}}
\def\subclassname{{\bfseries Mathematics Subject Classification
(2020)}\enspace}
\def\subclass#1{\par\addvspace\medskipamount{\rightskip=0pt plus1cm
\def\and{\ifhmode\unskip\nobreak\fi\ $\cdot$
}\noindent\subclassname\ignorespaces#1\par}}
\begin{document}
\title{On the nearly freeness of conic-line arrangements with nodes, tacnodes, and ordinary triple points}
\author{Aleksandra Ga\l ecka}
\date{\today}
\maketitle
\thispagestyle{empty}
\begin{abstract}
In the present note we provide a partial classification of nearly free conic-line arrangements in the complex plane having nodes, tacnodes, and ordinary triple points. In this setting, our theoretical bound tells us that the degree of such an arrangement is bounded from above by $12$. We construct examples of nearly free conic-line arrangements having degree $3,4,5,6,7$, and we prove that in degree $10$, $11$, and $12$ there is no such arrangement.
\keywords{14C20, 32S22}
\subclass{nearly free curves, conic-line arrangements}
\end{abstract}

\section{Introduction}
The theory of plane curve arrangements has recently gained a lot of attention among researchers. It is worth recalling some recent papers devoted to rational curve arrangements in the complex projective plane, for instance \cite{PSz,ST,STY}. One of the most interesting open questions that appears in the literature is the so-called numerical Terao's conjecture. There are some variants of this conjecture, but we focus here on the following. Let $\mathcal{C}, \mathcal{C}' \subset \mathbb{P}^{2}_{\mathbb{C}}$ be two reduced curves such that they have the same weak combinatorics, i.e., they have the same number of irreducible components of the same degree, and the same number of singularities of a given topological type. Assume that $\mathcal{C}'$ is free, then $\mathcal{C}$ has to be free. For instance,  Pokora and Dimca in  \cite{DimcaPokora} proved that the numerical Terao's conjecture holds in the class of conic-line arrangements with nodes, tacnodes, and ordinary triple points. On the other hand, it is known that numerical Terao's conjecture does not hold in the class of (triangular) line arrangements \cite{Mar}. From this perspective, it is natural to understand wider classes of curves and hopefully this effort will help us to understand numerical Terao's conjecture better. In the light of the original Terao's conjecture for line arrangements, which tells us that the freeness is determined by the intersection poset, Dimca and Sticlaru in \cite{DS} defined a new class of curves that is called nearly free. Here our aim is to understand nearly free complex conic-line arrangement with nodes, tacnodes, and ordinary triple points. Our motivation comes from aforementioned paper by Dimca and Pokora \cite{DimcaPokora}, where the authors classify all free conic-line arrangement with nodes, tacnodes, and ordinary triple points. Let us briefly present the main outcome of the note. First of all, we observe in Proposition \ref{bound} that if $\mathcal{C}$ is a conic-line arrangement in the complex plane with nodes, tacnodes, and ordinary triple points, then its degree is bounded from above by $12$. Then we start to analyse, case by case, in which degree we can find a nearly free conic-line arrangement -- it turns out that we can find such arrangements in degree $3,4,5,6,7$. However, for the degree $10$, $11$, and $12$ we show that there does not exist any conic-line arrangement in the complex plane with the prescribed above singularities that is nearly free. Based on the above discussion, one needs to decide the existence of nearly free conic-line arrangements in degree $8,9$.

Through the paper we work exclusively over the complex numbers. Our computations are performed with \verb}Singular} \cite{Singular}.

\section{Conic-line arrangements with nodes, tacnodes, and ordinary triple points}
Let $\mathcal{C} \subset \mathbb{P}^{2}_{\mathbb{C}}$ be an arrangement consisting of $d\geq 1$ lines and $k\geq 1$ smooth conics. We assume that $\mathcal{C}$ has only $n_{2}$ nodes, $t$ tacnodes, and $n_{3}$ ordinary triple points. Denote by $m$ the degree of $\mathcal{C} $ which is equal to $m = 2k+d$. Then we have the following combinatorial count
$$4\cdot\binom{k}{2} + 2kd + \binom{d}{2} = \binom{m}{2}-k = n_{2} + 2t + 3n_{3},$$
and this formula follows from B\'ezout theorem. 

Focusing on simple singularities like nodes, tacnodes, and ordinary triple points is the first non-trivial case when non-ordinary singularities occurs. For the discussion we will need also the notion of the global and local Tjurina numbers of singularities.

\begin{definition}
Let $f : (\mathbb{C}^{2}, 0) \rightarrow (\mathbb{C},0)$ be the germ of an isolated singularity. Then the (local) Tjurina number at $p=(0,0)$ is defined as
$$\tau_{p} := {\rm dim}_{\mathbb{C}}\frac{\mathbb{C}\{x,y\}}{\langle f, \partial_{x}\, f, \partial_{y}\, f\rangle}.$$
The total Tjurina number of a reduced curve $C$ is defined as
$$\tau(C) := \sum_{p \in {\rm Sing}(C)} \tau_{p},$$
where the sum goes over all singular points of $C$.
\end{definition}
\begin{remark}
Let us recall that if $p$ is a node, then $\tau_{p}=1$, if $q$ is a tacnode, then $\tau_{q}=3$, and if $r$ is an ordinary triple point, then $\tau_{r} = 4$. Based on it, if $\mathcal{C}$ is a conic-line arrangement with $n_{2}$ nodes, $t$ tacnodes, and $n_{3}$ ordinary triple points, then
$$\tau(\mathcal{C}) = n_{2} + 3t + 4n_{3}.$$
\end{remark}

\section{Nearly freeness of reduced curves}
 Let $C$ be a reduced curve $\mathbb{P}^{2}_{\mathbb{C}}$ of degree $m$ given by $f \in S :=\mathbb{C}[x,y,z]$. We denote by $J_{f}$ the Jacobian ideal generated by the partial derivatives $\partial_{x}f, \, \partial_{y}f, \, \partial_{z}f$. Moreover, we denote by $r:={\rm mdr}(f)$ the minimal degree of a non-trivial relation among the partial derivatives, i.e., the minimal degree $r$ of a triple $(a,b,c) \in S_{r}^{3}$ such that 
$$a\cdot \partial_{x} f + b\cdot \partial_{y}f + c\cdot \partial_{z}f = 0.$$
We denote by $\mathfrak{m} = \langle x,y,z \rangle$ the irrelevant ideal. Consider the graded $S$-module $N(f) = I_{f} / J_{f}$, where $I_{f}$ is the saturation of $J_{f}$ with respect to $\mathfrak{m}$. 
\begin{definition}
A reduced plane curve $C$ is \emph{nearly free} if $N(f) \neq 0$ and for every $k$ one has ${\rm dim} \, N(f)_{k} \leq 1$.
\end{definition}

In order to study nearly freeness of conic-line arrangements, we will use \cite[Theorem 1.3]{Dimca1}.
\begin{theorem}[Dimca]
\label{Dim}
Let $\mathcal{C} \subset \mathbb{P}^{2}_{\mathbb{C}}$ be a conic-line arrangement of degree $m$ and let $f=0$ be its defining equation. Denote by $r: = {\rm mdr}(f)$. Then $\mathcal{C}$ is nearly free if and only if
\begin{equation}
\label{Milnor} 
r^2 - r(m-1) + (m-1)^2 = \tau(\mathcal{C})+1,
\end{equation}
where $\tau(\mathcal{C})$ is the total Tjurina number of $\mathcal{C}$.
\end{theorem}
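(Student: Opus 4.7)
The plan is to derive the equivalence by combining the du Plessis--Wall bound on $\tau(\mathcal{C})$ with the Dimca--Sticlaru structural description of the Jacobian syzygy module $AR(f)$ in the nearly free case.

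The first step is a purely algebraic identity: expanding directly shows that $r^2 - r(m-1) + (m-1)^2 = (m-1)(m-r-1) + r^2$, so \eqref{Milnor} is equivalent to
\begin{equation*}
\tau(\mathcal{C}) \;=\; (m-1)(m-r-1) + r^2 - 1.
\end{equation*}
The quantity on the right is exactly one less than the du Plessis--Wall upper bound $\tau(\mathcal{C}) \le (m-1)(m-r-1) + r^2$, valid for $r \le (m-1)/2$, with equality characterizing freeness. Thus \eqref{Milnor} is the numerical assertion that $\tau(\mathcal{C})$ lies exactly one unit below this bound.

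For the forward direction I would invoke the Dimca--Sticlaru structure theorem: a nearly free curve admits a minimal three-generator free resolution of $AR(f)$ with two exponents $(d_1, d_2)$ satisfying $d_1 + d_2 = m$ and $d_1 = r$. A routine Hilbert-series calculation on the Milnor algebra $S/J_f$ then extracts the Tjurina number as $\tau(\mathcal{C}) = (m-1)^2 - d_1 d_2 + d_1 - 1$; substituting $d_2 = m-r$ collapses this to $(m-1)(m-r-1) + r^2 - 1$, establishing \eqref{Milnor}.

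The converse direction is the main obstacle, since the purely numerical condition \eqref{Milnor} must be upgraded to a structural statement about $N(f)$. The bridge is Dimca's refinement of the du Plessis--Wall inequality, which identifies the numerical slack with the total length of the defect module:
\begin{equation*}
(m-1)(m-r-1) + r^2 - \tau(\mathcal{C}) \;=\; \sum_{k} \dim_{\mathbb{C}} N(f)_k.
\end{equation*}
Under \eqref{Milnor} the left-hand side equals $1$, which simultaneously forces $N(f) \neq 0$ and $\dim N(f)_k \le 1$ in every degree, i.e., $\mathcal{C}$ is nearly free by definition. A final technical point to address is that one must lie in the regime $r \le (m-1)/2$ where the refined identity above applies; for a conic-line arrangement satisfying \eqref{Milnor}, the complementary range $r > (m-1)/2$ either reduces to the free case or contradicts the numerical identity, and can therefore be dispatched separately.
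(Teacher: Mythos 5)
The paper does not actually prove this statement---it is quoted as \cite[Theorem 1.3]{Dimca1}, with the following remark explaining that the hypothesis $r\le m/2$ can be dropped thanks to \cite[Theorem 3.2]{duP}---so you are reconstructing the proof of a cited result. Your forward direction is sound: the Dimca--Sticlaru resolution gives $\tau(\mathcal{C})=(m-1)^2-d_1(d_2-1)-1$ for a nearly free curve with exponents $(d_1,d_2)$, and substituting $d_1=r$, $d_2=m-r$ yields \eqref{Milnor}.

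The converse, however, rests on a false identity. The slack $(m-1)(m-r-1)+r^2-\tau(\mathcal{C})$ is \emph{not} the total length $\sum_k\dim_{\mathbb{C}}N(f)_k$ of the defect module. Dimca's refinement of du Plessis--Wall identifies it (for $r<m/2$) with the single middle graded piece $\dim N(f)_{[T/2]}$, where $T=3(m-2)$; this controls near-freeness only because the sequence $k\mapsto\dim N(f)_k$ is symmetric about $T/2$ and weakly increasing up to the middle, so the middle piece is the \emph{maximum}, not the sum. The two quantities genuinely differ: a nearly free curve with exponents $(d_1,d_2)$ has $\dim N(f)_k=1$ exactly on an interval of length $d_2-d_1+1$, so the total length is $d_2-d_1+1$ while the slack is $1$. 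Already the paper's first example (conic plus a secant line, $m=3$, exponents $(1,2)$, $\tau=2$, slack $=1$) has $N(f)_1\cong N(f)_2\cong\mathbb{C}$ and total length $2$; with your identity, your own forward and backward directions would contradict each other whenever $d_1\neq d_2$. The converse does go through once the identity is corrected---slack $=1$ then forces $\max_k\dim N(f)_k=1$ together with $N(f)\neq 0$---but the symmetry/monotonicity of the graded dimensions of $N(f)$ is precisely the nontrivial input you must prove or cite. Finally, the boundary regime cannot just be ``dispatched separately'': the clean way to rule out $r>m/2$ is du Plessis--Wall's sharper bound $\tau(\mathcal{C})\le (m-1)^2-r(m-1-r)-\binom{2r-m+2}{2}$ valid for $2r\ge m$, which shows that \eqref{Milnor} is incompatible with $2r>m$ (and is exactly the point of the paper's remark following the theorem).
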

\begin{remark}
In the original formulation of the above result there was the assumption that $r\leq m/2$. However, it turns out that it is not necessary, and this follows from \cite[Theorem 3.2]{duP}. 
\end{remark}
Now we are going to discuss the freeness from the homological viewpoint. We need the following result which comes from \cite{DimcaSticlaru}.
\begin{theorem}[Dimca-Sticlaru]
If $C\subset \mathbb{P}^{2}_{\mathbb{C}}$ is a nearly free reduced curve of degree $m$ given by $f \in S_{m}$, then the minimal resolution of the Milnor algebra $M(f)$ has the following form:
\begin{equation*}
\begin{split}
0 \rightarrow S(-d_{2}-m)\rightarrow S(-d_{1}-(m-1))\oplus S(-d_{2}-(m-1)) \oplus S(-d_{2}-(m-1)) \\ \rightarrow S^{3}(-m+1)\rightarrow S \rightarrow M(f) \rightarrow 0
\end{split}
\end{equation*} for some integers $d_{1} \leq d_{2}$ such that $d_{1} + d_{2} = m$. 
\end{theorem}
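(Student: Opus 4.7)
The plan is to translate the hypothesis, phrased via the module $N(f)$, into information about the Jacobian syzygy module
\begin{equation*}
AR(f) = \{(a,b,c) \in S^{3} : a\,\partial_{x}f + b\,\partial_{y}f + c\,\partial_{z}f = 0\},
\end{equation*}
and then to paste a minimal free resolution of $AR(f)$ onto the standard presentation of $M(f) = S/J_{f}$ coming from its three Jacobian generators. The starting point is the tautological exact sequence
\begin{equation*}
0 \to AR(f)(-m+1) \to S^{3}(-m+1) \to S \to M(f) \to 0,
\end{equation*}
in which the right-hand map sends $(a,b,c)$ to $a\,\partial_{x}f + b\,\partial_{y}f + c\,\partial_{z}f$. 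Producing a minimal free resolution of $M(f)$ is therefore equivalent to producing one of $AR(f)$ and shifting degrees by $m-1$.

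Next, I would use the near freeness hypothesis to pin down the structure of $AR(f)$. Writing $d_{1} = {\rm mdr}(f)$ and $d_{2} = m - d_{1}$, a Hilbert-function comparison, powered by Theorem \ref{Dim} and the bound $\dim N(f)_{k} \leq 1$, should show that $AR(f)$ is minimally generated by exactly three elements, one in degree $d_{1}$ and two in degree $d_{2}$, and that its first syzygies form a free $S$-module of rank one generated in degree $d_{2}+1$. Intuitively, the one-dimensional defect recorded by $N(f)$ produces a single extra generator compared with the free case (in which $AR(f)$ is free of rank two with generators in degrees summing to $m-1$), and this in turn forces exactly one compensating minimal relation.

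The main obstacle will be establishing the correct count of minimal generators sitting in degree $d_{2}$, since the Hilbert-series numerics only tell us how many syzygies live in each degree, not how many of them are independent modulo lower-degree ones. Resolving this requires unpacking the relationship between $AR(f)$ and the saturation $I_{f}$ of the Jacobian ideal, and using the Koszul syzygies among the partial derivatives to eliminate redundancies. Once this is settled, shifting the short resolution of $AR(f)$ back by $-m+1$ and splicing it into the first display above yields the complex stated in the theorem; minimality is then automatic because every nonzero entry of every differential sits in strictly positive degree.
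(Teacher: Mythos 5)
The paper itself offers no proof of this statement: it is imported verbatim from Dimca--Sticlaru \cite{DimcaSticlaru}, so there is no internal argument to compare yours against. That said, your outline does follow the same general route as the published proof: one passes to the Jacobian syzygy module $AR(f)$, which is a second syzygy module and hence has projective dimension at most one over $S$, resolves it, and splices the result into the presentation $0 \to AR(f)(-m+1) \to S^{3}(-m+1) \to S \to M(f) \to 0$. The skeleton is sound.

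As a proof, however, the text is incomplete, and the gap sits exactly where you flag it. The hypotheses $N(f)\neq 0$ and $\dim N(f)_{k}\le 1$, together with the Hilbert-function identity of Theorem \ref{Dim}, do not by themselves force $AR(f)$ to have minimal generators in degrees $d_{1},d_{2},d_{2}$ with a single relation in degree $d_{2}+1$: the same numerical data are a priori compatible with other distributions of generator and relation degrees. The ingredient that closes this, and which your sketch does not identify, is the self-duality of the Jacobian module, $N(f)_{k}\cong N(f)_{T-k}^{\vee}$ with $T=3(m-2)$; combined with $\dim N(f)_{k}\le 1$ it shows that $N(f)$ is one-dimensional in a symmetric interval of degrees and zero outside it, and this is what pins down the three generator degrees, the unique relation degree, and the identity $d_{1}+d_{2}=m$. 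Your proposed substitute --- using the Koszul syzygies among the partials to eliminate redundant degree-$d_{2}$ generators --- is misdirected, since the Koszul relations sit in degree $m-1$, which in general exceeds $d_{2}$ and so cannot account for dependencies there. Finally, minimality of the spliced complex is not quite ``automatic''; one still has to verify that every twist strictly increases along each differential, which requires $d_{1}\ge 1$ in addition to $d_{1}\le d_{2}$.
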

In the setting of the above theorem, the pair $(d_{1},d_{2})$ is called the set of exponents of the nearly free curve $C$.
\section{Nearly free arrangements of conic and lines with nodes, tacnodes,  and triple points}
In order to understand conic-line arrangements with nodes, tacnodes, and ordinary tiple points that are nearly free, we provide an upper-bound on the degree of such arrangements. We need the following result \cite[Proposition 4.7]{DimcaPokora}.
\begin{proposition}
\label{prop4.1}
Let $\mathcal{C} \, : \, f = 0$ be a conic-line arrangement of degree $m$ in $\mathbb{P}^{2}_{\mathbb{C}}$ such that it has only nodes, tacnodes, and ordinary triple intersection points. Then one has
$${\rm mdr}(f) \geq \frac{2}{3}m - 2.$$
\end{proposition}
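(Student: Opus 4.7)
The strategy is to combine a general upper bound on the total Tjurina number $\tau(\mathcal{C})$ in terms of $r := \mathrm{mdr}(f)$, coming from a du Plessis--Wall type inequality, with the combinatorial identities available in our specific setting of conic--line arrangements.

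First I would split into two regimes. If $r > (m-2)/2$ then $r \ge (m-1)/2$, which already exceeds $\tfrac{2}{3}m - 2$ for $m \le 6$ and reduces larger cases to a direct check or a trivial verification. So we may assume $r \le (m-2)/2$. In this range, the du Plessis--Wall inequality (cf.\ \cite{duP}) yields
$$\tau(\mathcal{C}) \le (m-1)^{2} - r(m-r-1).$$

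On the other hand, Section 2 gives
$$\tau(\mathcal{C}) = n_{2} + 3t + 4n_{3} = \binom{m}{2} - k + t + n_{3},$$
so combining with $k \le m/2$ (since $m = 2k + d$ with $d \ge 0$) and with additional combinatorial constraints on $t$ and $n_{3}$ --- two smooth conics meet in at most four points, each line meets each conic in at most two points, each tacnode lies on at most two components, etc. --- one extracts a lower bound for $\tau(\mathcal{C})$ that is quadratic in $m$ with a leading constant sufficient for our purposes. Plugging this lower bound into the upper bound produces a quadratic inequality in $r$ of the shape $r^{2} - (m-1)r + Q(m) \ge 0$; choosing the branch of the solution consistent with the standing hypothesis $r \le (m-2)/2$ then forces $r \ge \tfrac{2}{3}m - 2$.

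The main obstacle is sharpening the lower bound on $\tau(\mathcal{C})$ enough to yield the correct constant $\tfrac{2}{3}$. A naive application of $k \le m/2$ and $t, n_{3} \ge 0$ gives a quadratic in $r$ with negative discriminant, hence no nontrivial bound. The improvement comes from carefully exploiting the extra combinatorial structure forced by restricting the singularities to nodes, tacnodes, and ordinary triple points --- this is precisely the content of the Dimca--Pokora analysis in \cite{DimcaPokora}, and the technical heart of the argument.
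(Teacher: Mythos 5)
There is a genuine gap here, and in fact the strategy you describe cannot be made to work. First, note that the paper itself does not prove this statement: it is quoted as Proposition 4.7 of \cite{DimcaPokora}, and the proof there is essentially a one-line application of the Dimca--Saito type bound ${\rm mdr}(f)\ \ge\ \alpha_{\mathcal{C}}\cdot m-2$, where $\alpha_{\mathcal{C}}$ is the minimal Arnold exponent (log canonical threshold) over the singular points of $\mathcal{C}$; a node, a tacnode and an ordinary triple point have Arnold exponents $1$, $3/4$ and $2/3$ respectively, so $\alpha_{\mathcal{C}}\ge 2/3$ and the claim follows. Your proposal contains none of this local input and instead defers ``the technical heart of the argument'' back to \cite{DimcaPokora}, so even on its own terms it is an outline rather than a proof.

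More seriously, the du Plessis--Wall step is used in the wrong logical direction. On the range $r\le (m-2)/2$ the function $r\mapsto (m-1)^2-r(m-1-r)$ is \emph{decreasing} in $r$, so combining the upper bound $\tau(\mathcal{C})\le (m-1)^2-r(m-1-r)$ with a combinatorial \emph{lower} bound $\tau(\mathcal{C})\ge L$ gives $r^2-(m-1)r+(m-1)^2-L\ge 0$, whose branch compatible with $r\le (m-2)/2$ is $r\le r_-$, the \emph{smaller} root: an upper bound on $r$, not the required lower bound. To bound $r$ from below along these lines one would need the companion inequality $\tau(\mathcal{C})\ge (m-1)(m-1-r)$ together with an \emph{upper} bound $\tau(\mathcal{C})\le (m-1)(m+3)/3$; but since $\tau(\mathcal{C})=\binom{m}{2}-k+t+n_3$, this amounts to $t+n_3\le k-(m-1)(m-6)/6$, which fails badly for, say, five conics and two lines in general position ($m=12$, $\tau=61>55$), an arrangement to which the proposition nevertheless applies. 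So no sharpening of the global Tjurina count can rescue this route; the statement genuinely requires the local data of the allowed singularities. Finally, your preliminary reduction is also incomplete: for $m\ge 10$ one has $(m-1)/2<\tfrac{2}{3}m-2$, so the regime $r>(m-2)/2$ is not settled by ``a direct check,'' and it is exactly the degrees $m\ge 10$ that matter for the applications in this paper.
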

If $\mathcal{C} \, : f=0$ is a nearly free conic-line arrangement of degree $m$ with nodes, tacnodes, and ordinary triple intersection points with the exponents $(d_{1}, d_{2})$, $d_{1}\leq d_{2}$, then ${\rm mdr}(f) = d_{1}$, and since 
$$2d_{1} \leq d_{1}+d_{2} =m$$
we obtain that ${\rm mdr}(f) \leq m/2$. Combining it with the above proposition, we arrive at
$$\frac{2}{3}m - 2 \leq {\rm mdr}(f) \leq m/2.$$
It gives us the following result.
\begin{proposition}
\label{bound}
If $\mathcal{C} \subset \mathbb{P}^{2}_{\mathbb{C}}$ is a nearly free conic-line arrangement of of degree $m$ with nodes, tacnodes, and ordinary triple intersection points, then $m\leq 12$. 
\end{proposition}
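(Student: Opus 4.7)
The plan is to sandwich ${\rm mdr}(f)$ between an upper bound forced by the nearly free structure and the lower bound already quoted as Proposition \ref{prop4.1}, and then to solve the resulting linear inequality in $m$.

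First, I would invoke the Dimca--Sticlaru resolution recalled above for a nearly free curve of degree $m$: it produces a pair of exponents $(d_1,d_2)$ with $d_1 \leq d_2$ and $d_1 + d_2 = m$. The minimal degree of a syzygy on the Jacobian ideal is read off the leftmost nontrivial twist in the resolution, giving ${\rm mdr}(f) = d_1$. Combining this with $d_1 \leq d_2$ and $d_1 + d_2 = m$ yields the half-degree upper bound
$$
{\rm mdr}(f) = d_1 \leq \frac{m}{2}.
$$

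Second, I would apply Proposition \ref{prop4.1}, which provides the lower bound ${\rm mdr}(f) \geq \tfrac{2}{3}m - 2$ for any conic-line arrangement whose singularities are restricted to nodes, tacnodes, and ordinary triple points. Chaining the two inequalities produces
$$
\frac{2}{3}m - 2 \leq \frac{m}{2},
$$
which rearranges to $m/6 \leq 2$, i.e. $m \leq 12$.

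The derivation itself is a one-line arithmetic manipulation once the two ingredients are on the table, so there is essentially no obstacle in the proof of this proposition; the real work is packaged inside Proposition \ref{prop4.1}. The only subtlety I would double-check is the identification ${\rm mdr}(f) = d_1$ between the conventions of the Dimca--Sticlaru resolution and the syzygy degree appearing in Proposition \ref{prop4.1}; since both refer to the smallest degree of a nontrivial relation among $\partial_x f, \partial_y f, \partial_z f$, the identification is immediate and the bound $m \leq 12$ follows.
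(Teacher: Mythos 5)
Your argument is exactly the one the paper gives: identify ${\rm mdr}(f)=d_1$ from the nearly free exponents with $d_1\leq d_2$ and $d_1+d_2=m$ to get ${\rm mdr}(f)\leq m/2$, then combine with the lower bound ${\rm mdr}(f)\geq \tfrac{2}{3}m-2$ of Proposition \ref{prop4.1} to conclude $m\leq 12$. The proof is correct and essentially identical to the paper's.
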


Based on the above proposition, we can formulate the following problem.

\begin{problem}
Classify all weak combinatorics of conic-line arrangements with nodes, tacnodes, and ordinary triple points in $\mathbb{P}^{2}_{\mathbb{C}}$ that are nearly free.
\end{problem}
Here by a weak combinatorics we mean the vector $(d, k;n_{2},t,n_{3})$, where $d$ is the number of lines, $k$ is the number of conics (and of course $m=2k+d$).

Let us pass to some combinatorial constraints on the singular points of such conic-line arrangements that come from the data of the exponents $(d_{1},d_{2})$. If $\mathcal{C}$ is a nearly free conic-line arrangement with $k$ conics and $d$ lines, $d_{1} + d_{2}=m$, then following equality holds
\begin{equation}
d^{2}_{1} + d_{2}^{2} + d_{1}d_{2}-d_{1}-2d_{2} = n_{2} + 3t + 4n_{3}.    
\end{equation}
Since $\binom{d_{1}+d_{2}}{2} - k = n_{2} + 2t + 3n_{3}$, we can obtain the following equality
$$t + n_{3} = d^{2}_{1} + d_{2}^{2} + d_{1}d_{2}-d_{1}-2d_{2} - \binom{d_{1}+d_{2}}{2}+k.$$
This gives
\begin{equation}
\label{tacn3}
2(t+n_{3}) = d_{1}^{2} + d_{2}^{2} - d_{1} - 3d_{2} + 2k.
\end{equation}
The last ingredient that we will use in our classification problem is the following proposition which is a direct consequence of the previously known results from \cite{DimcaPokora,Pokora}.
\begin{proposition}
\label{Hirz}
Let $\mathcal{C}$ be a conic-line arrangement in $\mathbb{P}^{2}_{\mathbb{C}}$ of degree $m=2k+d\geq 6$ having only $n_{2}$ nodes, $t$ tacnodes, and $n_{3}$ ordinary triple points. Then we have the following inequality
$$8k + n_{2} + \frac{3}{4}n_{3}\geq d + \frac{5}{2}t.$$
\end{proposition}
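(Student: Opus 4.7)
The plan is to invoke a general Hirzebruch-type inequality for conic-line arrangements proved in \cite{Pokora} and used in \cite{DimcaPokora}, and then specialise it to the singularity content of the present setting. Such inequalities are ultimately derived by applying a Bogomolov--Miyaoka--Yau/Langer-type bound to a suitable log pair associated with the arrangement; all of that geometric work has already been carried out in the cited papers, so the task remaining is essentially a bookkeeping computation with the invariants $d$, $k$, $n_2$, $t$, $n_3$.

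Concretely, I would first recall from \cite{Pokora} the Hirzebruch-type inequality for a conic-line arrangement of degree $m = 2k+d \ge 6$ whose singularities are at worst nodes, tacnodes and ordinary triple points. The hypothesis $m \ge 6$ is exactly what guarantees that the underlying log pair is of log-general type so that the BMY-type bound applies, and it is therefore inherited by our statement. Second, I would combine the cited inequality with the B\'ezout identity
\[
\binom{m}{2} - k \;=\; n_2 + 2t + 3n_3
\]
from Section 2, which lets us eliminate any quadratic terms in $m$ and express everything as a linear relation in $d$, $k$, $n_2$, $t$, $n_3$. A short algebraic rearrangement should then yield precisely
\[
8k + n_2 + \tfrac{3}{4}n_3 \;\ge\; d + \tfrac{5}{2}t.
\]

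The only real point requiring care is matching coefficients. The weights $1$, $5/2$, $3/4$ attached to nodes, tacnodes and triple points reflect the local orbifold Euler-characteristic contributions of these singularities, while $8$ and $1$ on the curves' side encode the global contributions of smooth conics and lines. Provided one writes out the Pokora inequality with all of these local terms expanded, the specialisation to our form is a one-line manipulation. I therefore expect no genuine obstacle, since the heavy lifting --- the BMY/orbifold argument --- is already done in the cited references; the proposition is presented as a direct consequence for precisely this reason.
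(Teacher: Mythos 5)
Your proposal matches the paper's proof, which is itself just a citation: the inequality is stated to follow from the discussion around \cite[Theorem 2.1]{DimcaPokora} together with the improved constant from \cite[Theorem B]{Pokora}, exactly the Hirzebruch/BMY-type route you describe. The only cosmetic difference is that the cited results already deliver the inequality essentially in this linear form, so the extra step of eliminating quadratic terms via the B\'ezout count is not actually needed.
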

\begin{proof}
It follows from the discussions in the framework of \cite[Theorem 2.1]{DimcaPokora} and \cite[Theorem B]{Pokora}.
\end{proof}
\section{Partial classification of nearly free conic-line arrangements}
Here we perform a step-by-step approach towards the classification problem of our nearly free conic-line arrangements in the complex projective plane having $k\geq 1$ conics and $d\geq 1$ lines. We start with constructing explicit examples of nearly free conic-line arrangements having degree up to $7$. Then we present our argument standing behind the non-existence of nearly free conic-line arrangement with nodes, tacnodes, and ordinary triple points having degree $m \in \{10,11,12\}$. Unfortunately, our method does not allow us to decide on the non-existence of conic-line arrangements having degree $m \in \{8,9\}$, but we hope to resolve that problem by using a different approach.

\subsection{Case $m=3$}
Let us consider the following arrangement $\mathcal{C}_{3} = \{\ell, C\} \subset \mathbb{P}^{2}_{\mathbb{C}}$ defined by
$$F(x,y,z) = (x^{2} +y^{2} - 16z^{2})\cdot (y-x+4z).$$
It is easy to see that $\mathcal{C}_{3}$ has only $n_{2}=2$ and $t=n_{3}=0$, so its total Tjurina number $\tau(\mathcal{C}_{3})$ is equal to $2$. 
Using \verb}Singular}, we can compute ${\rm mdr}(F)$ which is equal to $1$.\\
By Theorem \ref{Dim} we see that
$$3= 1-2 + 4 = r^{2} - r(m-1)+(m-1)^{2} = \tau(\mathcal{C}_{3}) + 1 = 2+1,$$
so $\mathcal{C}_{3}$ is nearly free.
Observe that this is the only possible nearly free conic-line arrangement with nodes, tacnodes, ordinary triple points and $m=3$. Since $d_{1} + d_{2} = 3$ and $d_{1}\leq d_{2}$, we can have only $(d_{1},d_{2})=(1,2)$. Recall that ${\rm mdr}(f) = d_{1}=1$, and 
$$3=1^{2} - 1\cdot(3-1) + (3-1)^{2} =\tau(\mathcal{C})+1.$$
It means that $\tau(\mathcal{C})=2$, and the only possibility is to have $n_{2}=2$ and $t=0$, which completes our justification. 
\begin{figure}[ht]
\centering
\begin{tikzpicture}[line cap=round,line join=round,>=triangle 45,x=0.41613885779737725cm,y=0.4151073010090355cm,scale=0.6]
\clip(-9.372917867307349,-9.00970190076682) rectangle (24.881802226839987,8.087451162096906);
\draw [line width=2.pt] (0.,0.) ellipse (1.664555431189509cm and 1.660429204036142cm);
\draw [line width=2.pt,domain=-9.372917867307349:24.881802226839987] plot(\x,{(-4.--1.*\x)/1.});
\begin{scriptsize}
\draw [fill=black] (4.,0.) circle (2.0pt);
\draw [fill=black] (0.,-4.) circle (2.0pt);
\end{scriptsize}
\end{tikzpicture}
\caption{A nearly free arrangement with one conic and one line.}\label{rys1}
\end{figure}
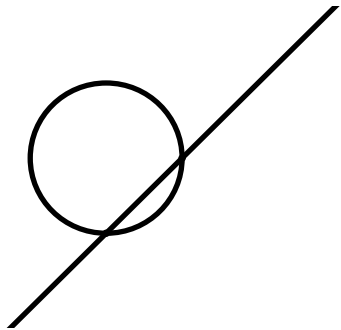

\subsection{Case $m=4$}
Now we consider the following arrangement $\mathcal{C}_{4} = \{\ell_{1},\ell_{2}, C\} \subset \mathbb{P}^{2}_{\mathbb{C}}$ given by 
$$F(x,y,z) = (x^{2} +y^{2} - 16z^{2})\cdot (y-x-4z)\cdot (y+x-4z).$$
It is easy to see that for $\mathcal{C}_{4}$ we have $n_{2}=2$ and $n_{3}=1$, so its total Tjurina number $\tau(\mathcal{C}_{4})$ is equal to $6$. Using \verb}Singular}, we can compute ${\rm mdr}(F)$ that is equal to $2$.\\
Using Theorem \ref{Dim}, we see that
$$7 = 4-6 +9 = r^{2} - r(m-1)+(m-1)^{2} = \tau(\mathcal{C}_{4}) + 1 = 6+1,$$
so $\mathcal{C}_{4}$ is nearly free.

\begin{figure}[ht]
\centering
\begin{tikzpicture}[line cap=round,line join=round,>=triangle 45,x=0.41613885779737725cm,y=0.4151073010090355cm, scale=0.6]
\clip(-9.372917867307349,-9.00970190076682) rectangle (24.881802226839987,8.087451162096906);
\draw [line width=2.pt] (0.,0.) ellipse (1.664555431189509cm and 1.660429204036142cm);
\draw [line width=2.pt,domain=-9.372917867307349:24.881802226839987] plot(\x,{(--4.--1.*\x)/1.});
\draw [line width=2.pt,domain=-9.372917867307349:24.881802226839987] plot(\x,{(--4.-1.*\x)/1.});
\begin{scriptsize}
\draw [fill=black] (4.,0.) circle (2.0pt);
\draw [fill=black] (0.,4.) circle (2.0pt);
\draw [fill=black] (-4.,0.) circle (2.0pt);
\end{scriptsize}
\end{tikzpicture}
\caption{A nearly free arrangement of degree $m=4$ with two nodes and one triple point.}\label{rys5}
\end{figure}
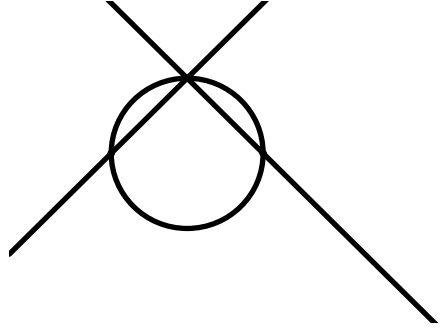
Now we show that for $m=4$ there is another possible nearly free conic-line arrangement. Observe that for $m=4$ we have two possible pairs of the exponents, namely $(d_{1},d_{2}) = (1,3)$ and $(d_{1},d_{2})=(2,2)$, and since $k\geq 1$, $d\geq 1$, our conic-line arrangement $\mathcal{C}$ consists of $k=1$ conics and $d=2$ lines. 
We see that for both possibilities of the exponents we have 
$\tau(\mathcal{C})=6$. The following weak combinatorics for conic line arrangements with nodes, tacnodes, and ordinary triple points and $m=4$ are admissible: 
\begin{center}
\begin{tabular}{||c c c||} 
 \hline
 $n_{2}$ & $t$ & $n_{3}$ \\ [0.5ex] 
 \hline\hline
 5 & 0  & 0   \\ 
 3 & 1  & 0   \\
 1 & 2  & 0  \\
 2 & 0  & 1  \\ [1ex] 
 \hline
\end{tabular}
\end{center}
Based on that, an arrangement with $3$ double points and one tacnode can be also nearly free.
Consider the arrangement $\mathcal{C}_{4}^{'} = \{\ell_{1},\ell_{2}, C\} \subset \mathbb{P}^{2}_{\mathbb{C}}$ given by 
$$G(x,y,z) = (x^{2} +y^{2} - 16z^{2})\cdot (y-4z)\cdot(y-x).$$
 Using \verb}Singular}, we can compute that ${\rm mdr}(G)$ is equal to $2$ and 
 $$7= 4 - 6 +9 = r^{2} - r(m-1)+(m-1)^{2} = \tau(\mathcal{C}_{4}^{'}) + 1 = 3 + 3\cdot 1 +1,$$
 \begin{figure}[ht]
\centering
\begin{tikzpicture}[line cap=round,line join=round,>=triangle 45,x=1.0cm,y=1.0cm, scale=0.28]
\clip(-10.82285310938766,-10.489844127057149) rectangle (12.28548981126729,7.241655604216725);
\draw [line width=2.pt] (0.,0.) circle (4.cm);
\draw [line width=2.pt,domain=-10.82285310938766:12.28548981126729] plot(\x,{(--4.-0.*\x)/1.});
\draw [line width=2.pt,domain=-10.82285310938766:12.28548981126729] plot(\x,{(-0.--1.*\x)/1.});
\end{tikzpicture}
\caption{A nearly free arrangement of degree $m=4$ with three nodes and one tacnode.}\label{rys5}
\end{figure}
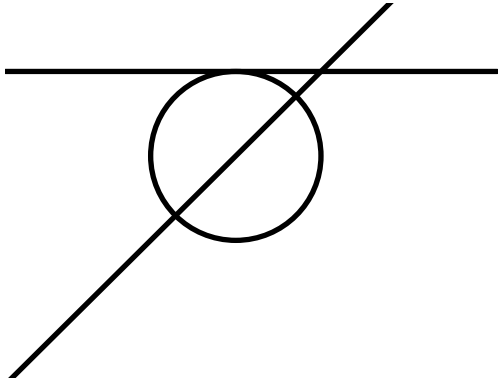
 so $\mathcal{C}_{4}^{'}$ is nearly free.
 
\subsection{Case $m=5$}
We consider here the following arrangement $\mathcal{C}_{5} = \{\ell_{1},\ell_{2},\ell_{3}, C\} \subset \mathbb{P}^{2}_{\mathbb{C}}$ given by 
$$F(x,y,z) = (x^{2} +y^{2} - 16z^{2})\cdot (y-x+4z)\cdot (y+x-4z)\cdot (y-x-4z).$$
It is easy to see that for $\mathcal{C}_{5}$ we have $n_{2}=3$ and $n_{3}=2$, so its total Tjurina number $\tau(\mathcal{C}_{5})$ is equal to $11$. Using \verb}Singular}, we can compute ${\rm mdr}(F)$ which is equal to $2$.\\
Using Theorem \ref{Dim}, we have
$$12= 4-8+16 = r^{2} - r(m-1)+(m-1)^{2} = \tau(\mathcal{C}_{5}) + 1 = 11+1,$$
so $\mathcal{C}_{5}$ is nearly free.

\begin{figure}[ht]
\centering
\begin{tikzpicture}[line cap=round,line join=round,>=triangle 45,x=0.4161388577973773cm,y=0.4151073010090355cm,scale=0.6]
\clip(-9.372917867307349,-9.00970190076682) rectangle (24.881802226839998,8.087451162096906);
\draw [line width=2.pt] (0.,0.) ellipse (1.6645554311895092cm and 1.660429204036142cm);
\draw [line width=2.pt,domain=-9.372917867307349:24.881802226839998] plot(\x,{(--4.--1.*\x)/1.});
\draw [line width=2.pt,domain=-9.372917867307349:24.881802226839998] plot(\x,{(--4.-1.*\x)/1.});
\draw [line width=2.pt,domain=-9.372917867307349:24.881802226839998] plot(\x,{(-4.--1.*\x)/1.});
\begin{scriptsize}
\draw [fill=black] (4.,0.) circle (2.0pt);
\draw [fill=black] (0.,4.) circle (2.0pt);
\draw [fill=black] (-4.,0.) circle (2.0pt);
\draw [fill=black] (0.,-4.) circle (2.0pt);
\end{scriptsize}
\end{tikzpicture}
\caption{A nearly free arrangement with one conic and three lines.}\label{rys3}
\end{figure}
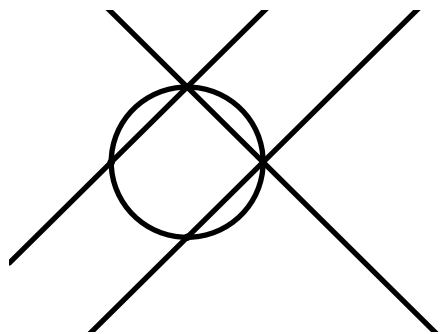

\subsection{Case $m=6$}
We consider here the following arrangement $\mathcal{C}_{6} = \{\ell_{1},\ell_{2},\ell_{3},\ell_{4}, C\} \subset \mathbb{P}^{2}_{\mathbb{C}}$ given by
$$F(x,y,z) = (x^{2} +y^{2} - 16z^{2})\cdot (y-x+4z)\cdot (y+x-4z)\cdot (y-x-4z)\cdot (y+x+4z).$$
It is easy to see that for $\mathcal{C}_{3}$ we have $n_{2}=2$ and $n_{3}=4$, so its total Tjurina number $\tau(\mathcal{C}_{6})$ is equal to $18$. Using \verb}Singular}, we can compute ${\rm mdr}(F)$ which is equal to $2$.\\
Using Theorem \ref{Dim}, we obtain
$$19= 4-10+25= r^{2} - r(m-1)+(m-1)^{2} = \tau(\mathcal{C}_{6}) + 1 = 18+1,$$
so $\mathcal{C}_{6}$ is nearly free.

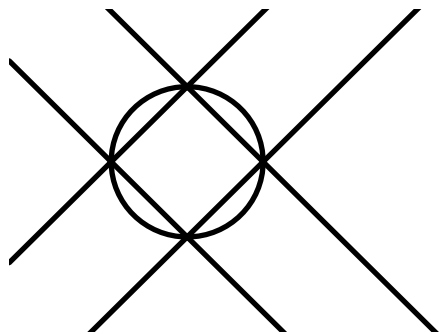
\begin{figure}[ht]
\centering
\begin{tikzpicture}[line cap=round,line join=round,>=triangle 45,x=0.41613885779737725cm,y=0.4151073010090355cm,scale=0.6]
\clip(-9.372917867307349,-9.00970190076682) rectangle (24.881802226839987,8.087451162096906);
\draw [line width=2.pt] (0.,0.) ellipse (1.664555431189509cm and 1.660429204036142cm);
\draw [line width=2.pt,domain=-9.372917867307349:24.881802226839987] plot(\x,{(--4.--1.*\x)/1.});
\draw [line width=2.pt,domain=-9.372917867307349:24.881802226839987] plot(\x,{(--4.-1.*\x)/1.});
\draw [line width=2.pt,domain=-9.372917867307349:24.881802226839987] plot(\x,{(-4.--1.*\x)/1.});
\draw [line width=2.pt,domain=-9.372917867307349:24.881802226839987] plot(\x,{(-4.-1.*\x)/1.});
\begin{scriptsize}
\draw [fill=black] (4.,0.) circle (2.0pt);
\draw [fill=black] (0.,4.) circle (2.0pt);
\draw [fill=black] (-4.,0.) circle (2.0pt);
\draw [fill=black] (0.,-4.) circle (2.0pt);
\end{scriptsize}
\end{tikzpicture}
\caption{A nearly free arrangement with one conic and four lines.}\label{rys4}
\end{figure}
\subsection{Case $m=7$}
Let us consider here the following arrangement $\mathcal{C}_{7} = \{\ell_{1},\ell_{2},\ell_{3},\ell_{4},\ell_{5}, C\} \subset \mathbb{P}^{2}_{\mathbb{C}}$ given~by
$$F(x,y,z) = (x^{2} +y^{2} - z^{2})\cdot (x^{2}-z^{2})\cdot (y^{2}-z^{2})\cdot (y+x).$$
It is easy to see that for $\mathcal{C}_{7}$ we have $n_{2}=6$, $t=4$, and $n_{3}=2$, so its total Tjurina number $\tau(\mathcal{C}_{7})$ is equal to $26$. Using \verb}Singular}, we can compute ${\rm mdr}(F)$ that is equal to $3$.\\
Using Theorem \ref{Dim}, we see 
$$27= 9-18+36= r^{2} - r(m-1)+(m-1)^{2} = \tau(\mathcal{C}_{7}) + 1 = 26+1,$$
so $\mathcal{C}_{7}$ is nearly free.

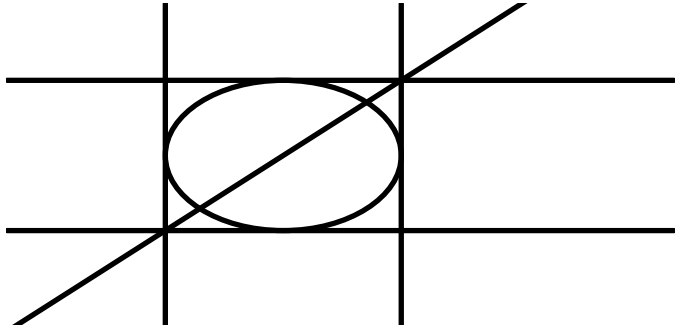
\begin{figure}[ht]
\centering
\begin{tikzpicture}[line cap=round,line join=round,>=triangle 45,x=0.6464093269651405cm,y=0.4151073010090362cm, scale=0.6]
\clip(-9.37291786730735,-9.009701900766835) rectangle (13.252113305987494,8.087451162096912);
\draw [line width=2.pt] (0.,0.) ellipse (2.585637307860562cm and 1.6604292040361448cm);
\draw [line width=2.pt] (-4.,-9.009701900766835) -- (-4.,8.087451162096912);
\draw [line width=2.pt,domain=-9.37291786730735:13.252113305987494] plot(\x,{(-32.-0.*\x)/-8.});
\draw [line width=2.pt] (4.,-9.009701900766835) -- (4.,8.087451162096912);
\draw [line width=2.pt,domain=-9.37291786730735:13.252113305987494] plot(\x,{(-16.-0.*\x)/4.});
\draw [line width=2.pt,domain=-9.37291786730735:13.252113305987494] plot(\x,{(-0.--8.*\x)/8.});
\begin{scriptsize}
\draw[color=black] (-3.694004835826135,7.740070843681836);
\end{scriptsize}
\end{tikzpicture}
\caption{A nearly free arrangement with one conic and five lines.}\label{rys6}
\end{figure}
\newpage
\subsection{Case $m=10$.}
Assume  that $\mathcal{C} \subset \mathbb{P}^{2}_{\mathbb{C}}$ is a nearly free conic-line arrangement with $m=10$ having the exponents $(d_{1},d_{2})$, $d_{1}\leq d_{2}$. By Proposition \ref{prop4.1}, we have
$$d_{1} = {\rm mdr}(f) \geq \frac{2}{3}m-2 = \frac{20}{3}-2 = 4 \,\frac{2}{3},$$
which implies that $d_{1}\geq 5$. However, it means that we need to consider the only one case, namely $(d_{1},d_{2})=(5,5)$.
Using equation \eqref{tacn3} we obtain
$$t+n_{3} = 15+k.$$
and it means, in particular, that $k \in \{1,2,3,4\}$.\\
By the combinatorial count, we have
$$\binom{10}{2}-k = n_{2} + 2(t+n_{3}) + n_{3},$$
so we arrive at
$$15-3k = n_{2}+n_{3},$$
and it means, in particular, that $k \in \{1,2,3,4\}$.
Hence
$$t=15+k-n_{3} = 15+k-(15-3k-n_{2}) = 4k+n_{2}.$$
Now we are going plug this data into inequality from Proposition \ref{Hirz}.\\
We have
$$8k+n_{2} + \frac{3}{4}(15-3k-n_{2}) \geq (10-2k)+\frac{5}{2}(4k+n_{2}).$$
After some simple manipulations we obtain
$$5\geq 9k + 9n_{2},$$
which is a contradiction since $k \in \{1,2,3,4\}$.
This proves the following result.

\begin{theorem}
There does not exists any nearly free conic-line arrangement in the complex projective plane with nodes, tacnodes, and ordinary triple points having degree $m=10$.
\end{theorem}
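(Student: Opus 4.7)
The plan is a three-stage argument that first pins down the exponents uniquely, then extracts linear equalities among $n_2, t, n_3, k$, and finally collides these with the Hirzebruch-type inequality of Proposition \ref{Hirz}. I would begin by combining the lower bound of Proposition \ref{prop4.1} with the nearly-free upper bound $d_1 \leq m/2$ noted earlier: for $m=10$, Proposition \ref{prop4.1} gives $d_1 \geq \tfrac{14}{3}$, so $d_1 \geq 5$, while $d_1 \leq d_2$ and $d_1 + d_2 = 10$ force equality throughout. Hence the only admissible pair of exponents is $(d_1,d_2) = (5,5)$, eliminating the usual case analysis in one shot.

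With the exponents fixed, I would feed $(5,5)$ into formula \eqref{tacn3} to obtain $t + n_3 = 15 + k$. A second linear relation comes from the Bézout combinatorial count $\binom{10}{2} - k = n_2 + 2t + 3n_3$ stated in Section 2. Solving these two simultaneously in the unknowns $(t, n_3)$ in terms of $(k, n_2)$ should yield $n_2 + n_3 = 15 - 3k$ and $t = 4k + n_2$. Non-negativity of $n_3$ already imposes $k \leq 5$, and $k \geq 1$ holds by assumption.

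Finally, I would substitute $d = 10 - 2k$ together with the expressions $n_3 = 15 - 3k - n_2$ and $t = 4k + n_2$ into the Hirzebruch-type bound $8k + n_2 + \tfrac{3}{4}n_3 \geq d + \tfrac{5}{2}t$ of Proposition \ref{Hirz}. I expect the $n_3$ and $t$ terms to collapse everything into a linear inequality in $(k, n_2)$ of the form $5 \geq 9k + 9n_2$ (or something close to it), which is incompatible with $k \geq 1$ and $n_2 \geq 0$.

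The main obstacle is purely bookkeeping: four independent linear inputs (the $\mathrm{mdr}$ bound, the exponent identity \eqref{tacn3}, the Bézout count, and the Hirzebruch-type inequality) must be combined without a sign error or a miscounted coefficient, and one must remember to clear denominators carefully when handling the $\tfrac{3}{4}$ and $\tfrac{5}{2}$ in Proposition \ref{Hirz}. There is no deeper geometric hurdle, since the uniqueness of $(d_1,d_2)$ in Step 1 removes the need to branch, and the proposition of Dimca--Pokora is already calibrated to give constraints of exactly the shape needed to rule out $m=10$.
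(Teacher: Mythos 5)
Your proposal is correct and follows essentially the same route as the paper: Proposition \ref{prop4.1} forces $(d_1,d_2)=(5,5)$, equation \eqref{tacn3} and the B\'ezout count give $t+n_3=15+k$, $n_2+n_3=15-3k$, $t=4k+n_2$, and substituting into Proposition \ref{Hirz} yields exactly the inequality $5\geq 9k+9n_2$, contradicting $k\geq 1$. The only cosmetic difference is that you bound $k\leq 5$ from $n_3\geq 0$ while the paper uses $d\geq 1$ to get $k\leq 4$; this is immaterial to the contradiction.
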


\subsection{Case $m=11$.}
Assume  that $\mathcal{C} \subset \mathbb{P}^{2}_{\mathbb{C}}$ is a nearly free conic-line arrangement with $m=11$ having the exponents $(d_{1},d_{2})$, $d_{1}\leq d_{2}$. By Proposition \ref{prop4.1}, we have that
$$d_{1} = {\rm mdr}(f) \geq \frac{2}{3}m-2 = \frac{22}{3}-2 = 5 \,\frac{1}{3},$$
so $d_{1} \geq 6$. However, it means that such a nearly free curve cannot exists, and we have the following proposition.

\begin{proposition}
There does not exists any nearly free conic-line arrangement in the complex projective plane with nodes, tacnodes, and ordinary triple points having degree $m=11$.
\end{proposition}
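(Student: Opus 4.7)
The plan is to mimic the short argument used for the $m=10$ case, but observe that for $m=11$ the dichotomy between the lower bound on $\mathrm{mdr}(f)$ and the upper bound forced by the pair of exponents is already incompatible, with no case analysis required.

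First, I would invoke Proposition \ref{prop4.1} to obtain
\[
d_1 \;=\; \mathrm{mdr}(f)\;\geq\;\tfrac{2}{3}\cdot 11 - 2 \;=\; \tfrac{16}{3},
\]
so that the integrality of $d_1$ forces $d_1\geq 6$. Next I would record the constraint coming from the fact that $\mathcal{C}$ is nearly free of degree $m=11$: the exponents $(d_1,d_2)$ satisfy $d_1\leq d_2$ and $d_1+d_2=11$, and therefore $d_1\leq \lfloor 11/2\rfloor = 5$.

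Comparing the two bounds gives $6\leq d_1\leq 5$, which is an immediate contradiction. Hence no such nearly free arrangement can exist, which is exactly the statement of the proposition.

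The argument uses no combinatorial count, no Hirzebruch-type inequality, and no Tjurina-number bookkeeping, so there is no real obstacle; the only subtlety is to make explicit the standard observation that $d_1=\mathrm{mdr}(f)\leq m/2$ for any nearly free curve, which already appeared in the derivation of Proposition \ref{bound}. This is what makes the $m=11$ case strictly easier than the $m=10$ case treated just above.
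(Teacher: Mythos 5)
Your argument is correct and is essentially the paper's own proof: both derive $d_1=\mathrm{mdr}(f)\geq \frac{2}{3}\cdot 11-2=\frac{16}{3}$, hence $d_1\geq 6$, and contrast this with $d_1\leq \lfloor 11/2\rfloor=5$ coming from $d_1\leq d_2$ and $d_1+d_2=11$. You merely make explicit the step $d_1\leq m/2$ that the paper leaves implicit after its "so $d_1\geq 6$" line.
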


\subsection{Case $m=12$.}
Assume that $\mathcal{C} \subset \mathbb{P}^{2}_{\mathbb{C}}$ is a nearly free conic-line arrangement with $m=12$ having the exponents $(d_{1},d_{2})$ with $d_{1}\leq d_{2}$. Using Proposition \ref{prop4.1}, we see that
$$d_{1}={\rm mdr}(f) \geq \frac{2}{3}m-2 = \frac{2}{3}\cdot 12 - 2 = 6,$$
so the only one case to consider is $(d_{1},d_{2})=(6,6)$.
Using equation \eqref{tacn3} we obtain
$$2(t+n_{3}) = 48 + 2k,$$
and plugging this into the combinatorial count we get
$$n_{2}+n_{3}=18-3k.$$
In particular, $k \in \{1,2,3,4,5\}$. \\
Using Proposition \ref{Hirz} we have
$$5k+18 \geq d + \frac{5}{2}t,$$ 
so finally
$$t \leq \frac{2}{5}\bigg(5k+18-d\bigg).$$
We have the following possibilities, depending on $k$, namely
\begin{table}[h]
\centering
\begin{tabular}{||c c c||} 
 \hline
 $k$ & $n_{3}$ & $t \leq$ \\ [0.5ex] 
 \hline\hline
 1 & 15  & 5   \\ 
 2 & 12  & 8   \\
 3 & 9   & 11  \\
 4 & 6   & 14  \\ 
 5 & 3   & 16    \\[1ex] 
 \hline
\end{tabular}.
\end{table} 
\newpage
Since in all the cases listed above we have $t+n_{3}\leq 20$, we arrive at a contradiction with respect to the condition that $t+n_{3} = 24+k$.
This allows us to conclude our discussion by the following result.

\begin{theorem}
There does not exists any nearly free conic-line arrangement in the complex projective plane with nodes, tacnodes, and ordinary triple points having degree $m=12$.
\end{theorem}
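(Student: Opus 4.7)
The plan is to combine Proposition \ref{prop4.1}, the Bézout-type combinatorial count, the exponent identity \eqref{tacn3}, and the Hirzebruch-type inequality of Proposition \ref{Hirz} to rule out every admissible value of the number of conics $k$.

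First I would pin down the exponents. Proposition \ref{prop4.1} forces $d_{1} = {\rm mdr}(f) \geq \frac{2}{3}\cdot 12 - 2 = 6$, while $d_{1} \leq d_{2}$ together with $d_{1} + d_{2} = 12$ gives $d_{1} \leq 6$. Hence the only possible pair of exponents is $(d_{1}, d_{2}) = (6, 6)$.

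Next I would extract two linear Diophantine constraints on $(n_{2}, t, n_{3}, k)$. Substituting $d_{1} = d_{2} = 6$ into \eqref{tacn3} gives $t + n_{3} = 24 + k$, and combining this with the combinatorial count $\binom{12}{2} - k = n_{2} + 2t + 3n_{3}$ yields $n_{2} + n_{3} = 18 - 3k$. Since all these quantities are nonnegative and $d = 12 - 2k \geq 1$, this restricts $k$ to $\{1, 2, 3, 4, 5\}$.

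Then I would feed these identities into Proposition \ref{Hirz}. Using $n_{2} = 18 - 3k - n_{3}$ and $d = 12 - 2k$, the inequality $8k + n_{2} + \frac{3}{4}n_{3} \geq d + \frac{5}{2}t$ rearranges, after dropping the nonpositive term $-\frac{1}{4}n_{3}$ on the left, to $5k + 18 \geq d + \frac{5}{2}t$, which in turn gives $t \leq \frac{2}{5}(7k + 6)$.

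Finally I would carry out the case analysis over $k \in \{1,2,3,4,5\}$. For each value, the bound on $t$ combined with $n_{3} \leq 18 - 3k$ yields $t + n_{3} \leq 20$ by a short tabulation, whereas the identity $t + n_{3} = 24 + k \geq 25$ forces the opposite strict inequality. The contradiction closes every case and proves the theorem. The main obstacle is checking that the weakening used to obtain $5k + 18 \geq d + \frac{5}{2}t$ does not lose too much to handle $k = 5$, where $t + n_{3}$ must reach $29$; the numerics turn out to be comfortable, with $t + n_{3} \leq 20$ in each row of the table, so no sharpening of Proposition \ref{Hirz} or of \eqref{tacn3} is needed.
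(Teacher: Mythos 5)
Your proposal is correct and follows essentially the same route as the paper: Proposition \ref{prop4.1} pins the exponents to $(6,6)$, equation \eqref{tacn3} and the combinatorial count give $t+n_{3}=24+k$ and $n_{2}+n_{3}=18-3k$ with $k\in\{1,\dots,5\}$, and Proposition \ref{Hirz} yields $t\leq\frac{2}{5}(5k+18-d)$, so that the tabulation forces $t+n_{3}\leq 20$ against $t+n_{3}\geq 25$. The only cosmetic difference is that you make explicit the step of dropping the term $-\frac{1}{4}n_{3}$ when weakening the Hirzebruch-type inequality, which the paper leaves implicit.
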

\section*{Acknowledgments}
This note is a part of the author's Master Thesis that is written under the supervision of Piotr Pokora. The research was conducted in the framework of \textit{The Excellence Initiative - Research University} Programme at the Pedagogical University of Cracow.

The author would like to warmly thank an anonymous referee for very useful suggestions and remarks that allowed to improve the paper,  especially for explaining the non-existence case with $m=10$.

\vskip 0.5 cm

Aleksandra Ga\l ecka \\
Department of Mathematics,
Pedagogical University of Krak\'ow,
Podchor\c a\.zych 2,
PL-30-084 Krak\'ow, Poland. \\
\nopagebreak
\textit{E-mail address:} \texttt{aleksandra.galecka@student.up.krakow.pl}
\bigskip
\end{document}